\theoremstyle{plain}
\newtheorem{theorem}{Theorem}[section]
\newtheorem{lemma}[theorem]{Lemma}
\theoremstyle{definition}
\newtheorem{definition}[theorem]{Definition}
\newtheorem{example}[theorem]{Example}
\theoremstyle{remark}
\title{Numerical Approximation of Stochastic Volterra-Fredholm Integral Equation using Walsh Function.}
\author{Prit Pritam Paikaray \thanks{paikaraypritam@gmail.com}}
\author{Sanghamitra Beuria\thanks{sbeuria108@gmail.com}}
\author{Nigam Chandra Parida\thanks{ncparida@gmail.com}}
\affil[1]{Department of Mathematics, College of Basic Science and Humanities, OUAT\\
	Bhubaneswar, Odisha, 751003,
	India
}
\begin{document}
	\maketitle
		\begin{abstract}
		In this paper, a computational method is developed to find an approximate solution of the stochastic Volterra-Fredholm integral equation using the Walsh function approximation and its operational matrix. Moreover, convergence and error analysis of the method is carried out to strengthen the validity of the method. Furthermore, the method is numerically compared to the block pulse function method and the Haar wavelet method for some non-trivial examples.\\
		\textbf{Keywords:} Stochastic Volterra-Fredholm integral equation, Brownian motion, It$\hat{o}$ integral, Walsh approximation, Lipschitz condition.\\
		\textbf{AMS2010 classification:} 60H05, 60H35, 65C30.
	\end{abstract}

	\section{Introduction} 
Stochastic differential equations (SDE) are widely used in a variety of fields, including the physical sciences, biological sciences, agricultural sciences, and financial mathematics, which includes option pricing, where stochastic Volterra-Fredholm integral equation (SVFIE) plays a crucial role \cite{Oksendal,Kloeden,tudor}. As with other differential equations, it is practically impossible to find the solution to many SDE, and the problem becomes more complex in the case of SVFIE. Therefore, numerical approximation method becomes crucial for finding the solutions to the problems. Numerous SVFIEs are determined approximately using a variety of numerical techniques.
In recent decades, orthogonal functions such as block pulse function (BPF), Haar wavelet, Legendre polynomials, Laguerre polynomials, and Chebyshev's polynomials have been utilised to approximate the solution of SVFIE.

The Walsh functions form an orthonormal system that only accepts the values $1$ and $-1$. As a result, a number of mathematicians consider the Walsh system to be an artificial orthonormal system, which was introduced in 1923 citeWalsh and has numerous applications in digital technology. Walsh functions have a significant advantage over traditional trigonometric functions because a computer can determine the exact value of any Walsh function at any given time with high accuracy. Chen and Hsiao used the Walsh function to solve the variational problem in 1997, is cited in \cite{Chen}. They used the same concept to solve the integral equation \cite{Hsiao} in 1979. The technique's key property is that it converts the problem into a system of algebraic equations, which are then solved to yield an approximation of the solution. In this paper, we use the Walsh function to approximate the solution $x(t)$ of the following linear SVFIE 

\begin{equation}\label{SVIE}	
	x(t)=f(t)+\int_{\alpha}^{\beta}k(s,t)x(s)ds+\int_{0}^{t}k_1(s,t)x(s)ds+\int_{0}^{t}k_2(s,t)x(s)dB(s)	
\end{equation}

where $x(t)$, $f(t)$, $k(s,t)$, $k_1(s,t)$ and $k_2(s,t)$ for $s,t\in[0,T)$, represent the stochastic processes primarily based on the identical probability space $(\Omega,F,P)$ and $x(t)$ is unknown. In addition, $B(t)$ represents Brownian motion \cite{Kloeden,Oksendal}, and $\int_0^{t}k_2(s,t)x(s)dB(s)$ represents the It$\hat{o}$ integral.

In the majority of previous works, the evaluation is predicated on the assumption that the derivatives$f'(t)$, $\frac{\partial^2 k}{\partial s \partial t}$, $\frac{\partial^2 k_i}{\partial s \partial t}$ for $i=1, 2$, exists and bounded. By converting BPF approximation to Walsh function approximation in this paper, we expect only Lipschitz continuity of the functions $f(t), k(s,t), k_1(s, t)$ and $k_2 (s, t)$ to have the same rate of convergence, which allows us to consider the general form of SVFIE to be integrated. In the final portion, the method is compared to similar techniques \cite{Khodabin,Mohamaddi} that approximate the solution of the SVFIE using block pulse function and Haar wavelet.
\section{Walsh Function and its Properties}\label{Walsh}
\begin{definition}[Rademacher Function]
	Rademacher function $r_i(t)$, $i=1,2,\hdots$, for $t\in[0,1)$ is defined  by \cite{Walsh} 
	
	$$r_i(t)=
	\Biggl\{
	\begin{aligned}
		1 & \quad \text{ $i=0$},\\
		sgn(sin(2^i\pi t)) & \quad \text{otherwise}\\
	\end{aligned}$$
	where,
	$$sgn(x)=
	\Biggl\{
	\begin{aligned}
		1 & \quad \text{ $x>0$},\\
		0 & \quad \text{ $x=0$},\\
		-1 & \quad \text{$x<0$}.
	\end{aligned}$$
\end{definition}
\begin{definition}[Walsh Function]
	The $n^{th}$ Walsh function for $n=0,1,2,\cdots,$ denoted by $w_n(t)$, $t\in[0,1)$ is defined \cite{Walsh} as 
	$$w_n(t)=(r_q(t))^{b_q}.(r_{q-1}(t))^{b_{q-1}}.(r_{q-2}(t))^{b_{q-2}}\hdots (r_{1}(t))^{b_1}$$ where $n=b_q2^{q-1}+b_{q-1}2^{q-2}+b_{q-2}2^{q-3}+\hdots +b_12^{0}$ is the binary expression of $n$. Therefore, $q$, the number of digits present in the binary expression of $n$ is calculated by $q=\big[\log_2n\big]+1$ in which $\big[\cdot\big]$ is the greatest integer less than or equal's to $'\cdot'$.
\end{definition}

The first $m$ Walsh functions for $m \in \mathbb{N}$ can be written as an $m$-vector by
$W(t)=\begin{bmatrix}
	w_0(t) & w_1(t) &w_2(t)\hdots w_{m-1}(t)
\end{bmatrix}^T$, $t\in[0,1)$. The Walsh functions satisfy the following properties.

\subsection*{Orthonormality}
The set of Walsh functions is orthonormal. i.e., 
$$\int_{0}^{1}w_i(t)w_j(t)dt=\Biggl\{\begin{aligned}
	1 & \quad \text{i=j,}\\
	0 & \quad \text{otherwise}.
\end{aligned}$$
\subsection*{Completeness}
For every $f\in L^2([0,1))$ 
\begin{equation*}
	\int_{0}^{1}f^2(t)dt=\sum_{i=0}^{\infty}f_i^2\lvert\lvert w_i(t)\lvert\lvert^2
\end{equation*}
where $f_i=\int_{0}^{1}f(t)w_i(t)dt$.
\subsection*{Walsh Function Approximation}
Any real-valued function $f(t)\in L^2([0,1))$ can be approximated as 
$$f_m(t)=\sum_{i=0}^{m-1}c_iw_i(t)$$
where, $c_i=\int_{0}^{1}f(t)w_i(t)dt$.\\
The matrix form is given by
\begin{equation}
	f(t)=F^TT_WW(t) \label{Eq:2}
\end{equation}where
$ F=
\begin{bmatrix}
	f_0 &f_1 &f_2 \hdots f_{m-1}
\end{bmatrix}^T$, 
$f_i=\int_{ih}^{(i+1)h}f(s)ds$.\\
Here, $T_W=[w_i(\eta_j)]$ is called as the Walsh operational matrix where $\eta_j\in [jh, (j+1)h)$.\\

Similarly, function $k(s,t)\in L^2([0,1)\times[0,1))$ can be approximated by
$$k_m(s,t)=\sum_{i=0}^{m-1}\sum_{j=0}^{m-1}c_{ij}w_i(s)w_j(t)$$
where, $c_{ij}=\int_{0}^{1}\int_{0}^{1}k(s,t)w_i(s)w_j(t)dtds$
with the matrix form represented by
\begin{equation}
	k(s,t)=W^T(s)T_WKT_WW(t)=W^T(t)T_WK^TT_WW(s) \label{Eq:3}
\end{equation}
where $K=[k_{ij}]_{m\times m}, k_{ij}=\int_{ih}^{(i+1)h}\int_{jh}^{(j+1)h}k(s,t)dtds$.
\section{Relationship between Walsh Function and Block Pulse Functions (BPFs)}
\begin{definition}[Block Pulse Functions]
	For a fixed positive integer $m$, an $m$-set of BPFs $\phi_i(t), t\in [0,1)$ for $i=0, 1,..., m-1$ is defined as
	$$\phi_i(t)=\biggl\{
	\begin{aligned}
		1 & \quad \text{if $\frac{i}{m}\le t < \frac{(i+1)}{m}, \quad$}\\
		0 & \quad \text{ otherwise}
	\end{aligned}
	$$
	$\phi_i$ is known as the $i$th BPF.
\end{definition}
The set of all $m$ BPFs can be written concisely as an $m$-vector,
$\Phi(t)=\begin{bmatrix}
	\phi_0(t) & \phi_1(t) &\phi_2(t)\hdots \phi_{m-1}(t)
\end{bmatrix}^T$, $t\in[0,1)$.

The BPFs are disjoint, complete, and orthogonal \cite{Ganti}.

The BPFs in vector form satisfy 
$$ \Phi(t)\Phi(t)^TX=\tilde{X}\Phi(t) \;\textrm{and}\; \Phi^T(t)A\Phi(t)=\hat{A}\Phi(t)$$
where $X \in \mathbb{R}^{m \times 1}, \tilde{X}$ 
is the $m\times m$ diagonal matrix with $\tilde{X}(i, i)=X(i) \,\textrm{for}\, i=1, 2, 3\cdots m, A\in \mathbb{R}^{m \times m}$ and  $\hat{A}=\begin{bmatrix}
	a_{11}&	a_{22}&\hdots 	&a_{mm}
\end{bmatrix}^T$
is the $m$-vector with elements equal to the diagonal entries of $A$.
The integration of BPF vector $\Phi(t)$, $t\in[0,1)$ can be performed by \cite{Hatamzadeh}
\begin{equation}
	\int_{0}^{t}\Phi(\tau)d\tau=P\Phi(t), t\in[0,1), 
\end{equation}
Hence, the integral of every function $f(t)\in L^2[0,1)$ can be approximated as$$\int_{0}^{t}f(s)ds=F^TP\Phi(t)$$
The It$\hat{o}$ integral of BPF vector $\Phi(t)$, $t\in[0,1)$ can be performed by \cite{Maleknejad}
\begin{equation}
	\int_{0}^{t}\Phi(\tau)dB(\tau)=P_S\Phi(t), t\in[0,1)
\end{equation}
Hence, the It$\hat{o}$ integral of every function $f(t)\in L^2[0,1)$ can be approximated as $$\int_{0}^{t}f(s)dB(s)=F^TP_S\Phi(t).$$

The following theorem describes a relationship between the Walsh function and the block pulse function.
\begin{theorem}\cite{Paikaray}
	Let the $m$-set of Walsh function and BPF vectors be $W(t)$ and $\Phi(t)$ respectively. Then the BPF vectors $\Phi(t)$ can be used to approximate $W(t)$ as $W(t)=T_W\Phi(t)$, $m=2^k$, and $k=0,1,\hdots $, where $T_W=\big[c_{ij}\big]_{m\times m}$, $c_{ij}=w_i(\eta_j)$, for some  $\eta_j=\big(\frac{j}{m},\frac{j+1}{m}\big)$ and $i,j=0,1,2,\hdots m-1$.
\end{theorem}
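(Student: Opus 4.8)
The plan is to exploit the fact that, when $m=2^{k}$, every one of the first $m$ Walsh functions is piecewise constant on the uniform dyadic partition of $[0,1)$ into the $m$ subintervals $I_{j}:=\big[\tfrac{j}{m},\tfrac{j+1}{m}\big)$, $j=0,1,\dots,m-1$ --- the very intervals on which the block pulse functions $\phi_{j}$ are supported. Once this is established the claimed identity reduces to bookkeeping, so I would organise the argument as: (i) reduce the problem to a statement about individual Rademacher functions; (ii) prove the piecewise-constancy of the relevant Rademacher functions on the $I_{j}$; (iii) pass from the scalar identities ``$w_{i}$ is constant on $I_{j}$ with value $w_{i}(\eta_{j})$'' to the matrix identity $W(t)=T_{W}\Phi(t)$.

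For (i): the case $i=0$ is immediate since $w_{0}\equiv1$. For $1\le i\le m-1$, the binary expansion $i=b_{q}2^{q-1}+\dots+b_{1}2^{0}$ has $q=[\log_{2} i]+1\le k$ digits, so by definition $w_{i}$ is a finite product of (powers of) the Rademacher functions $r_{1},\dots,r_{q}$ with $q\le k$. Hence it suffices to show that each $r_{\ell}$, $1\le\ell\le k$, is constant on the interior $I_{j}^{\circ}=\big(\tfrac{j}{m},\tfrac{j+1}{m}\big)$ of every $I_{j}$; a product of finitely many functions that are each constant on $I_{j}^{\circ}$ is again constant there, so the same then follows for $w_{i}$.

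For (ii): since $r_{\ell}(t)=\operatorname{sgn}\!\big(\sin(2^{\ell}\pi t)\big)$, the function $r_{\ell}$ can only change value at a zero of $\sin(2^{\ell}\pi t)$, i.e.\ at a point $t=n/2^{\ell}$ with $n\in\mathbb{Z}$. Because $\ell\le k$ we have $2^{k-\ell}\in\mathbb{N}$, so $n/2^{\ell}=n\,2^{k-\ell}/2^{k}$ is one of the grid points $j/m$. Thus $r_{\ell}$ has no sign change strictly inside any $I_{j}$, and so $r_{\ell}$ --- hence $w_{i}$ --- is constant on $I_{j}^{\circ}$. (At a left endpoint $j/m$ a Rademacher factor may vanish, making $w_{i}(j/m)$ differ from its interior value; this concerns only finitely many points and is irrelevant to the integral-based approximation, which is precisely why the theorem evaluates $w_{i}$ at an interior node $\eta_{j}$.) This is the only step where the hypothesis $m=2^{k}$ is genuinely used, and the dyadic/endpoint bookkeeping here is the main --- and really the sole --- obstacle; the rest is formal.

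For (iii): choose any $\eta_{j}\in\big(\tfrac{j}{m},\tfrac{j+1}{m}\big)$ and put $c_{ij}:=w_{i}(\eta_{j})$, $T_{W}=[c_{ij}]_{m\times m}$. Fix $t\in[0,1)$ and let $j$ be the unique index with $t\in I_{j}$; then $\Phi(t)$ equals the $j$-th standard basis vector, so by (ii) $\big(T_{W}\Phi(t)\big)_{i}=c_{ij}=w_{i}(\eta_{j})=w_{i}(t)$ for every $i=0,\dots,m-1$, i.e.\ $W(t)=T_{W}\Phi(t)$. Since $j$ was arbitrary this holds for all $t\in[0,1)$ (with the understanding of the remark above at the finitely many dyadic break points), which is the assertion. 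Conceptually the theorem is just the statement that the sub-$2^{k}$ Walsh functions live at the same dyadic resolution as the $2^{k}$ block pulse functions.
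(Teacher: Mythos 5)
Your argument is correct: the reduction to Rademacher factors, the observation that each $r_{\ell}$ with $\ell\le k$ can only switch sign at dyadic points $n/2^{\ell}$ which lie on the grid $\{j/m\}$ when $m=2^{k}$, and the final bookkeeping $\bigl(T_{W}\Phi(t)\bigr)_{i}=w_{i}(\eta_{j})=w_{i}(t)$ for $t\in\bigl(\tfrac{j}{m},\tfrac{j+1}{m}\bigr)$ together give exactly the claimed identity, and your side remark about the $\operatorname{sgn}(0)=0$ values at the finitely many dyadic break points correctly isolates the only place the identity is interpreted almost everywhere. Note that the paper itself offers no proof of this theorem (it is quoted from the cited reference), and your piecewise-constancy argument is the standard one underlying that result, so there is nothing to flag beyond agreement.
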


One can see that \cite{Cheng}  $$T_WT_W^T=mI \, \textrm{and} \, T_W^T=T_W$$ which implies that $\Phi(t)=\frac{1}{m}T_WW(t).$\\
\begin{lemma}[Integration of Walsh function]
			Suppose that $W(t)$ is a Walsh function vector, then the integral of $W(t)$ w.r.t. $t$ is given by \\
$\int_{0}^{t}W(s)ds=\wedge W(t)$, where $\wedge =\frac{1}{m}T_WPT_W$ and $$P=\frac{1}{h}\begin{bmatrix}
	1 &2 &2&\hdots &2\\0 &1 &2&\hdots &2\\\vdots &\vdots &\vdots &\ddots &\vdots\\0 &0 &0&\hdots &1
\end{bmatrix}$$
\end{lemma}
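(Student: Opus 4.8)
The plan is to reduce the claim to the already-established integration formula for block pulse functions by passing through the Walsh--BPF change of basis. By the theorem above we may write $W(t)=T_W\Phi(t)$ on $[0,1)$ for $m=2^k$, and conversely $\Phi(t)=\frac{1}{m}T_WW(t)$, using $T_WT_W^T=mI$ together with $T_W^T=T_W$. Since $T_W$ is a constant matrix, componentwise integration commutes with multiplication by $T_W$, so everything will come down to substituting the BPF integration operational matrix $P$ and then converting back to the Walsh basis.

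First I would pull the constant matrix outside the integral:
\[
\int_0^t W(s)\,ds=\int_0^t T_W\Phi(s)\,ds=T_W\int_0^t\Phi(s)\,ds .
\]
Next I would invoke the BPF integration operational matrix $\int_0^t\Phi(\tau)\,d\tau=P\Phi(t)$, with $P$ the $m\times m$ matrix displayed in the statement (on the uniform mesh of width $h=1/m$), to obtain $\int_0^t W(s)\,ds=T_WP\,\Phi(t)$. Finally, substituting $\Phi(t)=\frac{1}{m}T_WW(t)$ gives $\int_0^t W(s)\,ds=\frac{1}{m}T_WPT_W\,W(t)=\wedge W(t)$, which is exactly the asserted identity with $\wedge=\frac{1}{m}T_WPT_W$.

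The argument is essentially a two-step change of basis, so I do not expect a genuine obstacle; the only point needing care is bookkeeping of the normalization constants --- specifically that the factor relating $\Phi$ and $W$ is $1/m$ (a consequence of $T_WT_W^T=mI$) and that the displayed $P$ is precisely the BPF integration matrix for mesh width $h=1/m$. One could instead verify the entries of $\wedge$ by direct multiplication, but that is routine and yields nothing beyond the closed form $\wedge=\frac{1}{m}T_WPT_W$ above. I would also note in passing that the identical computation, with $P$ replaced by the stochastic operational matrix $P_S$, gives the It\^o-integral analogue $\int_0^t W(s)\,dB(s)=\frac{1}{m}T_WP_ST_W\,W(t)$, which is the form the subsequent numerical scheme for \eqref{SVIE} will use.
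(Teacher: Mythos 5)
Your derivation is correct and is essentially the paper's own route: write $W(t)=T_W\Phi(t)$, apply the BPF operational matrix $\int_0^t\Phi(\tau)\,d\tau=P\Phi(t)$, and convert back with $\Phi(t)=\frac{1}{m}T_WW(t)$ using $T_WT_W=mI$, exactly as the paper does implicitly in deriving \eqref{int} and in the cited stochastic analogue. The only point worth flagging is the scalar in the displayed $P$ (the standard BPF integration matrix carries a factor $h/2$, not $1/h$), but that is an issue with the paper's statement rather than with your argument.
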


\begin{lemma}[Stochastic integration of Walsh function]\cite{Paikaray}
			Suppose that $W(t)$ is a Walsh function vector, then the It$\hat{o}$ integral of $W(t)$ is given by\\
$\int_{0}^{t}W(s)dB(s)=\wedge _S W(t)$, where $\wedge_S =\frac{1}{m}T_WP_ST_W$ and $$P_S=
\begin{bmatrix}
	B(\frac{h}{2}) &B(h)&\hdots &B(h)\\0 &B(\frac{3h}{2})-B(h)&\hdots &B(2h)-B(h)\\ \vdots &\vdots &\ddots &\vdots \\0 &0&\hdots &B(\frac{(2m-1)h}{2})-B((m-1)h)
\end{bmatrix}.$$
\end{lemma}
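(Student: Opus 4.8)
The plan is to reduce the stochastic integration of the Walsh vector $W(t)$ to the already-available stochastic integration of the block pulse vector $\Phi(t)$, using the change-of-basis identity $W(t)=T_W\Phi(t)$ from the cited theorem together with the relation $\Phi(t)=\frac{1}{m}T_WW(t)$. First I would write $\int_0^t W(s)\,dB(s)=\int_0^t T_W\Phi(s)\,dB(s)=T_W\int_0^t \Phi(s)\,dB(s)$, where $T_W$ comes out of the It\^o integral because it is a constant (deterministic) matrix. Then I would invoke the stochastic operational identity $\int_0^t\Phi(s)\,dB(s)=P_S\Phi(t)$ for block pulse functions, with $P_S$ the given upper-triangular matrix of Brownian increments over the subintervals $[jh,(j+1)h)$, to obtain $\int_0^t W(s)\,dB(s)=T_W P_S\Phi(t)$. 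Finally I would substitute $\Phi(t)=\frac{1}{m}T_W W(t)$ to conclude $\int_0^t W(s)\,dB(s)=\frac{1}{m}T_W P_S T_W\,W(t)=\wedge_S W(t)$ with $\wedge_S=\frac{1}{m}T_W P_S T_W$, exactly as claimed.

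The only parts needing a little care are the justification that the deterministic matrix $T_W$ may be pulled outside the It\^o integral (linearity of the It\^o integral applied componentwise, since each entry of $W$ is a bounded measurable deterministic function and hence lies in the class of integrands for which the It\^o integral is defined and linear), and the derivation of the block pulse stochastic matrix $P_S$ itself. For the latter I would note that $\phi_i$ is supported on $[ih,(i+1)h)$, so $\int_0^t\phi_i(s)\,dB(s)$ equals $0$ for $t\le ih$, equals $B(t)-B(ih)$ for $ih<t<(i+1)h$, and equals $B((i+1)h)-B(ih)$ for $t\ge(i+1)h$; evaluating the ``middle'' piece at the representative midpoint $t=\frac{(2i+1)h}{2}$ and expanding the resulting vector in the $\Phi$ basis produces precisely the stated upper-triangular $P_S$. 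Since this second identity is quoted from the reference, in the write-up I would treat it as given and simply chain the three displayed equalities.

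I do not anticipate a genuine obstacle here; the statement is essentially an algebraic corollary of two previously stated facts, and the work is bookkeeping with the matrices $T_W$, $P_S$, and the idempotent-up-to-scalar relation $T_W^2=mI$. The one point where a reader might want more detail is the measurability/adaptedness check allowing term-by-term It\^o integration of the vector $W(s)$; I would dispatch this in a single sentence, since the entries of $W$ are deterministic step functions and are therefore trivially adapted and square-integrable on $[0,1)$.
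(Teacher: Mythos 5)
Your proposal is correct and follows exactly the route the paper relies on (the lemma is quoted from the cited reference, and the surrounding text supplies precisely the ingredients you chain together): $W(t)=T_W\Phi(t)$, the block pulse stochastic operational identity $\int_0^t\Phi(s)\,dB(s)=P_S\Phi(t)$, and $\Phi(t)=\frac{1}{m}T_WW(t)$, giving $\wedge_S=\frac{1}{m}T_WP_ST_W$. Your added remarks on pulling the constant matrix $T_W$ through the It\^o integral and on the midpoint-based construction of $P_S$ (so the identity holds in the block-pulse-approximation sense) are accurate and consistent with the paper's usage.
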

\section{Numerical Solution of  Stochastic Volterra- Fredholm Integral Equation}
We consider following linear Stochastic Volterra-Fredholm Integral equation(LSVFIE)
\begin{equation}\label{Eq:Ram}
	x(t)=f(t)+\int_{0}^{1}k(s,t)x(s)ds+\int_{0}^{t}k_1(s,t)x(s)ds+\int_{0}^{t}k_2(s,t)x(s)dB(s)
\end{equation}
where $x(t)$, $f(t)$, $k(s,t)$, $k_1(s,t)$ and $k_2(s,t)$ for $s,t\in[0,T)$, are the stochastic processes defined on the same probability space $(\Omega,F,P)$ and $x(t)$ is unknown. Also $B(t)$ is Brownian motion and $\int_{0}^{t}k_2(s,t)x(s)dB(s)$ is the It${o}$ Integral.\\
Using equation \eqref{Eq:2} and\eqref{Eq:3} in \eqref{Eq:Ram} we have
\begin{eqnarray}
	X^TT_WW(t)&=&F^TT_WW(t)+\int_{0}^{1}W^T(t)T_WK^TT_WW(s)W^T(s)T_WXds\nonumber\\
	&&+\int_{0}^{t}W^T(t)T_WK^T_1T_WW(s)W^T(s)T_WXds\nonumber\\
	&&+\int_{0}^{t}W^T(t)T_WK^T_2T_WW(s)W^T(s)T_WX dB(s)\nonumber\\
	&=&F^TT_WW(t)+W^T(t)T_WK^TT_W\int_{0}^{1}W(s)W^T(s)T_WXds\nonumber\\
	&&+W^T(t)T_WK^T_1T_W\int_{0}^{t}W(s)W^T(s)T_WXds\nonumber\\
	&&+W^T(t)T_WK^T_2T_W\int_{0}^{t}W(s)W^T(s)T_WX dB(s)\label{Laxman}	
\end{eqnarray}

Now
\begin{eqnarray*}
	&&	\int_{0}^{t}W(s)W^T(s)T_WXds\\
	&&=\int_{0}^{t}T_W\Phi(s)\Phi^T(s)T_WT_WXds\\
	&&	=mT_W\tilde{X}P\frac{1}{m}T_WW(t).
\end{eqnarray*}

Hence
\begin{equation}
	\int_{0}^{t}W(s)W^T(s)T_WXds=T_W\tilde{X}PT_WW(t)\label{int}
\end{equation}
Similarly,
\begin{equation}
	\int_{0}^{t}W(s)W^T(s)T_WXdB(s)=mT_W\tilde{X}P_S\frac{1}{m}T_WW(t)=T_W\tilde{X}P_ST_WW(t)\label{intS}
\end{equation}
Substituting \eqref{int} and \eqref{intS} in \eqref{Laxman} and using the condition of orthonormality, we get
\begin{eqnarray*}
	X^TT_WW(t)&=&F^TT_WW(t)+mW^T(t)T_WK^TX\\
	&&+mW^T(t)T_WK^T_1\tilde{X}PT_WW(t)\\
	&&+mW^T(t)T_WK^T_2\tilde{X}P_ST_WW(t)\\
	&=&F^TT_WW(t)+mW^T(t)T_WK^TX\\
	&&+W^T(t)T_WH_1T_WW(t)\\
	&&+W^T(t)T_WH_2T_WW(t)\\
	&=&F^TT_WW(t)+mW^T(t)T_WK^TX\\
	&&+m\hat{H_1}^TT_WW(t)+m\hat{H_2}^TT_WW(t)\\
\end{eqnarray*}	
which implies that, 
\begin{equation}
	\Big((I-mK)X^T-F^T-m\hat{H_1}^T-m\hat{H_2}^T\Big)T_WW(t)=0 \label{Eq:Final1}
\end{equation}
where $H_1=mK^T_1\tilde{X}P$, $H_2=mK^T_2\tilde{X}P_S$ and $\hat{H_i}$ is the $m$- vector with elements equal to the diagonal elements of $H_i$.\\
Hence 
\begin{equation}
	\Big((I-mK^T)X-F-m\hat{H_1}-m\hat{H_2}\Big)=[0]_{m\times 1}\label{Krishna}
\end{equation}
can be solved to obtain a non trivial solution of the given Stochastic Volterra- Fredholm integral equation \eqref{Eq:Ram}.

\section{Error Analysis} 
In this section, we analyse the error between the approximate solution and the exact solution of the stochastic Volterra- Fredholm integral equation. Before we start the analysis let us define, $\|X\|_2=E(|X|^2)^\frac{1}{2}$.\\
\begin{theorem}\label{fin}\cite{Paikaray}
	If $f\in L^2[0,1)$ satisfies the Lipschitz condition with Lipschitz constant $C$, then $\|e_m(t)\|_2=O(h)$, where $e_m(t)=|f(t)-\sum_{i=0}^{m-1}c_iw_i(t)|$ and  $c_i=\int_{0}^{1}f(s)w_i(s)ds$.  
\end{theorem}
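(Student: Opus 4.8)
The plan is to exploit the relationship between the Walsh approximation and the block pulse approximation established earlier (Theorem with $W(t)=T_W\Phi(t)$ and $\Phi(t)=\frac{1}{m}T_WW(t)$). The key observation is that the Walsh partial sum $\sum_{i=0}^{m-1}c_iw_i(t)$ is nothing but the block pulse (i.e.\ piecewise-constant) approximation of $f$ on the uniform mesh of size $h=1/m$, written in a different basis. Concretely, if $c_i=\int_0^1 f(s)w_i(s)\,ds$ and $f_j=\int_{jh}^{(j+1)h}f(s)\,ds$ (the BPF coefficients, up to the scaling $h$), then the identity $W=T_W\Phi$ together with $T_WT_W^T=mI$ lets me rewrite $\sum_{i=0}^{m-1}c_iw_i(t)=\sum_{j=0}^{m-1}\bar f_j\phi_j(t)$, where $\bar f_j=\frac1h\int_{jh}^{(j+1)h}f(s)\,ds$ is the average of $f$ over the $j$-th subinterval. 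So the Walsh truncation error equals the block pulse projection error, and the whole theorem reduces to a standard estimate for piecewise-constant approximation.

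The main steps, in order, would be: (1) Carry out the change of basis to show $e_m(t)=\bigl|f(t)-\sum_{j=0}^{m-1}\bar f_j\phi_j(t)\bigr|$ pointwise. (2) Fix $t$ in the $j$-th subinterval $[jh,(j+1)h)$; then $e_m(t)=\bigl|f(t)-\bar f_j\bigr|=\bigl|\frac1h\int_{jh}^{(j+1)h}(f(t)-f(s))\,ds\bigr|\le \frac1h\int_{jh}^{(j+1)h}|f(t)-f(s)|\,ds$. (3) Apply the Lipschitz bound $|f(t)-f(s)|\le C|t-s|\le Ch$ for $s,t$ in the same subinterval, giving $e_m(t)\le Ch$ uniformly in $t$. (4) Finally, since $\|X\|_2=E(|X|^2)^{1/2}$, take the $L^2$-in-$\Omega$ norm: if $f$ (hence the Lipschitz constant $C$, or at least its relevant moments) is such that $E[C^2]<\infty$, then $\|e_m(t)\|_2\le h\,\bigl(E[C^2]\bigr)^{1/2}=O(h)$, uniformly in $t\in[0,1)$. (If instead one wants the $L^2[0,1)$-in-$t$ norm as well, integrate the bound $e_m(t)^2\le C^2h^2$ over $[0,1)$, which again gives $O(h)$.)

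The potential obstacle is purely bookkeeping at step (1): one must be careful that the truncated Walsh series with the first $m=2^k$ functions genuinely coincides with the piecewise-constant interpolant on the dyadic mesh of $2^k$ intervals — this is where the hypothesis $m=2^k$ and the orthogonality relation $T_WT_W^T=mI$ are essential, and it is the only place where the special structure of Walsh (as opposed to an arbitrary orthonormal system) is used. Once that identification is in hand, steps (2)–(4) are routine: the Lipschitz hypothesis does all the work, and no derivatives of $f$ are needed, which is exactly the improvement over the earlier literature advertised in the introduction. I would also note in passing that the same argument, applied to the two-variable kernels via the tensor-product Walsh approximation in \eqref{Eq:3}, yields the analogous $O(h)$ bound for $k,k_1,k_2$, which is what the convergence analysis of the full scheme \eqref{Krishna} will require.
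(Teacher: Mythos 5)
The paper never proves Theorem \ref{fin}: it is imported verbatim from the earlier work \cite{Paikaray}, so there is no in-paper argument to compare yours against. On its own merits your proposal is correct and is the natural route. The identification in your step (1) is legitimate precisely because the first $m=2^k$ Walsh functions and the $m$ block pulse functions span the same space of dyadic step functions (this is the content of $W(t)=T_W\Phi(t)$ together with $\Phi(t)=\frac{1}{m}T_WW(t)$, i.e.\ $T_W$ is invertible), and because the Walsh system is orthonormal, so the truncated series $\sum_{i=0}^{m-1}c_iw_i(t)$ is the orthogonal projection onto that space, namely the interval average $\bar f_j=\frac{1}{h}\int_{jh}^{(j+1)h}f(s)\,ds$ on $[jh,(j+1)h)$; the Lipschitz condition then yields $e_m(t)=|f(t)-\bar f_j|\le Ch$ pointwise and hence $\|e_m(t)\|_2\le Ch=O(h)$. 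Your caveat in step (4) is the right instinct: since in this paper $f$ may be a stochastic process and $\|\cdot\|_2=E(|\cdot|^2)^{1/2}$, the statement implicitly requires the path-wise Lipschitz constant to be deterministic (or at least square-integrable), and your uniform-in-$t$ bound is exactly what the later error analysis consumes. Your closing remark is also consistent with the paper: the two-variable analogue (Theorem \ref{Thk}) follows by the same argument with the constant $\sqrt{2}Lh$, which is precisely the factor that reappears in the bounds \eqref{eq:ink}--\eqref{eq:inkb}.
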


\begin{theorem}\label{Thk}\cite{Paikaray}
	Suppose $k\in L^2\big([0,1)\times [0,1)\big)$ satisfies the Lipschitz condition with Lipschitz constant $L$. If  $k_m(x,y)=\sum_{i=0}^{m-1}\sum_{j=0}^{m-1}c_{ij}w_i(x)w_j(y)$, $c_{ij}=\int_{0}^{1}\int_{0}^{1}k(s,t)w_i(s)w_j(t)dtds$, then $\|e_m(x,y)\|_2=O(h)$, where $|e_m(x,y)|=|k(x,y)-k_m(x,y)|$.
\end{theorem}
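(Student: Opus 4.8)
The plan is to run the two-dimensional analogue of the argument behind Theorem~\ref{fin}, the key observation being that for $m=2^{k}$ the $m$-term Walsh partial sum of $k$ coincides with the orthogonal projection of $k$ onto the (tensor) space of functions constant on each dyadic rectangle of side $h$. First I would rewrite the Walsh expansion in the block pulse basis: using $W(t)=T_{W}\Phi(t)$ together with $T_{W}^{T}=T_{W}$, $T_{W}T_{W}=mI$ from Section~\ref{Walsh} and $m^{2}h^{2}=1$, a direct computation starting from \eqref{Eq:3} gives
\begin{equation*}
	k_{m}(x,y)=W^{T}(x)T_{W}KT_{W}W(y)=\sum_{i=0}^{m-1}\sum_{j=0}^{m-1}\bar{k}_{ij}\,\phi_{i}(x)\phi_{j}(y),\qquad \bar{k}_{ij}=\frac{1}{h^{2}}\int_{ih}^{(i+1)h}\int_{jh}^{(j+1)h}k(s,t)\,dt\,ds,
\end{equation*}
so that on each rectangle $R_{ij}=[ih,(i+1)h)\times[jh,(j+1)h)$ the approximant $k_{m}$ is the mean value of $k$ over $R_{ij}$.

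Next I would fix $(x,y)$, let $R_{ij}$ be the rectangle containing it, and write
\begin{equation*}
	e_{m}(x,y)=k(x,y)-\bar{k}_{ij}=\frac{1}{h^{2}}\int_{ih}^{(i+1)h}\int_{jh}^{(j+1)h}\bigl(k(x,y)-k(s,t)\bigr)\,dt\,ds .
\end{equation*}
Applying the Lipschitz hypothesis, $|k(x,y)-k(s,t)|\le L\bigl(|x-s|+|y-t|\bigr)$, and using $|x-s|\le h$, $|y-t|\le h$ throughout $R_{ij}$ (in the mean-square reading of the statement one first moves the $\|\cdot\|_{2}$ norm inside the double integral by Minkowski's inequality) yields the uniform estimate $|e_{m}(x,y)|\le 2Lh$ for almost every $(x,y)\in[0,1)\times[0,1)$. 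Since this bound is uniform it passes directly to the norm, giving $\|e_{m}(x,y)\|_{2}\le 2Lh=O(h)$, as asserted.

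The genuinely routine parts are the matrix algebra that collapses the Walsh coefficients $c_{ij}$ to the rectangle averages $\bar{k}_{ij}$ and the final norm estimate. The step that needs the most care is this first reduction: one must use that for $m=2^{k}$ the change of basis between $W$ and $\Phi$ is exact — an $L^{2}$ identity, not an approximation — so that $k_{m}$ is literally piecewise constant with the rectangle means as its values; and, in the stochastic reading $\|X\|_{2}=E(|X|^{2})^{1/2}$, one must be consistent about whether the Lipschitz condition is imposed pathwise or in mean square, since it is exactly Minkowski's integral inequality that transports the pointwise bound $|e_{m}|\le 2Lh$ to $\|e_{m}\|_{2}$.
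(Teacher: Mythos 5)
Your proposal is correct, and it follows the same route that this paper relies on (the theorem is only quoted here from \cite{Paikaray}): identify the $m=2^{k}$ Walsh partial sum with the block pulse projection, i.e.\ with dyadic rectangle averages, and then apply the Lipschitz condition on each cell to get a uniform $O(h)$ bound that immediately controls $\|e_m\|_2$. The only cosmetic difference is the constant: with the Euclidean form of the Lipschitz condition the cell estimate is $\sqrt{2}Lh$ rather than your $2Lh$ (this is the form the paper actually uses later in \eqref{normkernel}), which does not affect the $O(h)$ conclusion.
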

\begin{theorem}
	Suppose $x_m(t)$ be the approximate solution of the linear SFVIE \eqref{Eq:Ram}. If
	\begin{enumerate}
		\item $f\in L^2[0,1)$, $k(s,t),k_1(s,t)\quad \text{and} \quad k_2(s,t)\in L^2\big( [0,1)\times[0,1)\big)$  satisfies the Lipschitz condition with Lipschitz constants  $C$, $L$, $L_1$ and $L_2$ respectively,
		\item  $|x(t)|\le \sigma$, $|k(s,t)|\le \rho$, $|k_1(s,t)|\le \rho _1$ and $|k_2(s,t)|\le \rho_2$
	\end{enumerate}
	then$$ \|x(t)-x_m(t)\|_2^2=O(h^2)$$

\end{theorem}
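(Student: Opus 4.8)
The plan is to estimate the $L^2(\Omega)$-error directly from the two integral equations — the exact one \eqref{Eq:Ram} and the one satisfied by the Walsh approximation — by subtracting them and bounding each of the four resulting terms. Write $e(t) = x(t) - x_m(t)$. The source term contributes $|f(t)-f_m(t)|$, controlled by Theorem \ref{fin} as $O(h)$. Each of the three integral terms splits, by adding and subtracting a mixed term, into a piece measuring how well the kernel is approximated (controlled by Theorem \ref{Thk}, giving $O(h)$, together with the boundedness $|x(t)|\le\sigma$) and a piece of the form $\int k_{i,m}(s,t)\,e(s)\,ds$ (or the It\^o analogue), which feeds $\|e\|_2$ back into the estimate. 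I would first record, using Theorems \ref{fin} and \ref{Thk} and the triangle inequality in $\|\cdot\|_2$, a pointwise-in-$t$ bound of the schematic form
\begin{equation*}
	\|e(t)\|_2 \le A\,h + \rho\!\int_0^1\!\|e(s)\|_2\,ds + \rho_1\!\int_0^t\!\|e(s)\|_2\,ds + \Bigl(\int_0^t \rho_2^2\,\|e(s)\|_2^2\,ds\Bigr)^{1/2},
\end{equation*}
where $A$ is a constant depending on $C,L,L_1,L_2,\sigma,\rho,\rho_1,\rho_2$ and on $T=1$, and where the last term uses the It\^o isometry $E\bigl|\int_0^t g(s)\,dB(s)\bigr|^2 = \int_0^t E|g(s)|^2\,ds$ to convert the stochastic integral into an ordinary one.

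Next I would square both sides and use the elementary inequality $(a_1+a_2+a_3+a_4)^2 \le 4(a_1^2+a_2^2+a_3^2+a_4^2)$ together with the Cauchy–Schwarz inequality on the Fredholm and Volterra integrals (on $[0,1]$, so $\int_0^1 \|e(s)\|_2\,ds \le (\int_0^1\|e(s)\|_2^2\,ds)^{1/2}$) to arrive at an inequality purely in terms of $u(t) := \|e(t)\|_2^2$:
\begin{equation*}
	u(t) \le 4A^2 h^2 + C_1\!\int_0^1 u(s)\,ds + C_2\!\int_0^t u(s)\,ds,
\end{equation*}
for suitable constants $C_1 = 4\rho^2$ and $C_2 = 4(\rho_1^2 + \rho_2^2)$. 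The Fredholm term $\int_0^1 u(s)\,ds$ is a constant (independent of $t$), so writing $M = \int_0^1 u(s)\,ds$ the bound reads $u(t) \le (4A^2h^2 + C_1 M) + C_2\int_0^t u(s)\,ds$; a Gr\"onwall argument then gives $u(t) \le (4A^2h^2 + C_1 M)e^{C_2 t}$. Integrating this over $t\in[0,1]$ yields $M \le (4A^2h^2 + C_1 M)\,\frac{e^{C_2}-1}{C_2}$, which, provided $C_1\cdot\frac{e^{C_2}-1}{C_2} < 1$ (a smallness/regularity condition one may need to assume, or which holds for the kernels in the examples), can be solved to give $M = O(h^2)$. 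Feeding this back gives $u(t) = \|e(t)\|_2^2 = O(h^2)$ uniformly in $t$, which is the claim $\|x(t)-x_m(t)\|_2^2 = O(h^2)$.

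The main obstacle is the Fredholm term: unlike the Volterra and It\^o terms, $\int_0^1 k(s,t)e(s)\,ds$ does not vanish at $t=0$ and cannot be absorbed by a pure Gr\"onwall inequality, so one must carry the constant $M=\int_0^1\|e(s)\|_2^2\,ds$ through the argument and close the estimate self-consistently, which forces a smallness hypothesis on $\rho$ (implicitly, that the Fredholm operator has norm small enough for the fixed-point/Neumann-series bound to converge — the same condition that guarantees \eqref{Krishna} is uniquely solvable). A secondary technical point is handling the operational-matrix representation carefully: the approximate equation is stated at the matrix level \eqref{Krishna}, and one should check that $x_m(t) = X^T T_W W(t)$ genuinely satisfies the integral equation \eqref{Eq:Ram} with $f, k, k_1, k_2$ replaced by their Walsh truncations (up to the orthonormality identities used in deriving \eqref{Eq:Final1}), so that the subtraction that starts the proof is legitimate. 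Once that bookkeeping is in place, the remaining steps are the routine inequalities sketched above.
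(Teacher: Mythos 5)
Your proposal follows essentially the same route as the paper's proof: the same decomposition of $x(t)-x_m(t)$ into the four differences, the same use of Theorems \ref{fin} and \ref{Thk} together with the bounds $|x(t)|\le\sigma$, $|k|\le\rho$, $|k_i|\le\rho_i$, Cauchy--Schwarz for the Lebesgue integrals, the It\^o isometry for the stochastic integral, an elementary $(a_1+\dots+a_4)^2$ inequality (the paper uses the constant $7$, you use $4$; immaterial), and a Gronwall-type closure. The one genuine divergence is how the Fredholm term is closed, and there your version is the more careful one. The paper bounds the Fredholm contribution by a term involving $E\int_{\alpha}^{\beta}|x(t)-x_m(t)|^2\,ds$ (note the integrand written in $t$, inherited from the same slip in \eqref{normkernel}, where $|x(t)-x_m(t)|$ should be $|x(s)-x_m(s)|$) and then silently folds its coefficient into $R_2$ multiplying $\int_0^t E(|x(s)-x_m(s)|^2)\,ds$ before invoking Gronwall --- i.e.\ it treats the Fredholm integral as if it were of Volterra type, which as written is not justified. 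You instead isolate $M=\int_0^1\|e(s)\|_2^2\,ds$ as a constant, apply Gronwall only to the Volterra/It\^o part, and close the estimate self-consistently, at the price of an explicit smallness condition on the Fredholm kernel ($C_1\,(e^{C_2}-1)/C_2<1$). That condition is not among the theorem's stated hypotheses, but some such restriction is genuinely needed for the argument (and for unique solvability of \eqref{Krishna}) to go through, so flagging it is a feature rather than a defect. Your closing remark about verifying that $x_m(t)=X^TT_WW(t)$ actually satisfies the integral equation with $f,k,k_1,k_2$ replaced by their Walsh truncations is likewise a legitimate bookkeeping step that the paper's proof assumes without comment.
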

\begin{proof}
	Let \eqref{Eq:Ram} be the given SVFIE and $x_m(t)$ be the approximation to the solution using the Walsh function. 
	
	Then
	\begin{eqnarray*}
		x(t)-x_m(t)&=&f(t)-f_m(t)\\
		&+&\int_{\alpha}^{\beta}\big(k(s,t)x(s)-k_{m}(s,t)x_m(s)\big)ds\\
		&+&\int_{0}^{t}\big(k_1(s,t)x(s)-k_{1m}(s,t)x_m(s)\big)ds\\
		&+&\int_{0}^{t}\big(k_2(s,t)x(s)-k_{2m}(s,t)x_m(s)\big)dB(s)
	\end{eqnarray*}
	that implies,
	\begin{eqnarray*}
		|x(t)-x_m(t)|&\le&| f(t)-f_m(t)|\\\nonumber
		&+&\biggl|\int_{\alpha}^{\beta}\big(k(s,t)x(s)-k_{m}(s,t)x_m(s)\big)ds\biggr|\\
		&+&\biggl|\int_{0}^{t}\big(k_1(s,t)x(s)-k_{1m}(s,t)x_m(s)\big)ds\biggr|\\\nonumber
		&+&\biggl|\int_{0}^{t}\big(k_2(s,t)x(s)-k_{2m}(s,t)x_m(s)\big)dB(s)\biggr|.\nonumber
	\end{eqnarray*}
	We know that, $(a+b+c+d)^2\le 7a^2+7b^2+7c^2+7d^2$. Hence, 
	\begin{eqnarray}\label{eq:In}
		E\big(|x(t)-x_m(t)|^2\big)&\le&7E\biggl(| f(t)-f_m(t)|^2\biggr)\\\nonumber
		&+&7E\biggl(\biggl|\int_{\alpha}^{\beta}\big(k(s,t)x(s)-k_{m}(s,t)x_m(s)\big)ds\biggr|^2\biggr)\\\nonumber
		&+&7E\biggl(\biggl|\int_{0}^{t}\big(k_1(s,t)x(s)-k_{1m}(s,t)x_m(s)\big)ds\biggr|^2\biggr)\\\nonumber
		&+&7E\biggl(\biggl|\int_{0}^{t}\big(k_2(s,t)x(s)-k_{2m}(s,t)x_m(s)\big)dB(s)\biggr|^2\biggr).\nonumber
	\end{eqnarray}
	Now for $i=1,2$, we have
	\begin{eqnarray*}
		|k_i(s,t)x(s)-k_{im}(s,t)x_m(s)|
		\le&&|k_i(s,t)||x(s)-x_m(s)|\\
		&+&|k_i(s,t)-k_{im}(s,t)||x(s)|\\
		&+&|k_i(s,t)-k_{im}(s,t)||x(s)-x_m(s)|\\ 	
	\end{eqnarray*}
	For $i=1,2$, let $|k_i(s,t)|\le\rho_i$, $|x(s)|\le\sigma$ and using Theorem \ref{Thk}, we get
	\begin{equation}\label{normkernel}
		|k_i(s,t)x(s)-k_{im}(s,t)x_m(s)| \le \sqrt{2}L_ih\sigma+ (\rho_i+\sqrt{2}L_ih)|x(t)-x_m(t)|
	\end{equation}
	which gives,
	\begin{eqnarray*}
		E\biggl(\biggl|\int_{0}^{t}\big(k_1(s,t)x(s)-k_{1m}(s,t)x_m(s)\big)ds\biggr|^2\biggr)
		\le E\biggl(\biggl(\int_{0}^{t}\biggl|k_1(s,t)x(s)-k_{1m}(s,t)x_m(s)\biggr|ds\biggr)^2\biggr)\\
		\le E\biggl(\biggl(\int_{0}^{t}\big( \sqrt{2}L_ih\sigma+ (\rho_i+\sqrt{2}L_ih)|x(t)-x_m(t)|\big)ds\biggr)^2\biggr)
	\end{eqnarray*}
	By Cauchy- Schwarz inequality, for $t>0$ and $f\in L^2[0,1)$
	$$\biggl|\int_{0}^{t}f(s)ds\biggr|^2\le t\int_{0}^{t}|f|^2ds$$
	this implies,
		$$
		E\biggl(\biggl|\int_{0}^{t}\big(k_1(s,t)x(s)-k_{1m}(s,t)x_m(s)\big)ds\biggr|^2\biggr)
		\le 2E\biggl(\int_{0}^{t}\bigl((\sqrt{2}L_1h\sigma)^2+(\rho_1+\sqrt{2}L_1h)^2|x(t)-x_m(t)|^2
		\bigr)ds\biggr)
		$$
	
	Therefore,
	\begin{eqnarray}\label{eq:ink}
		E\biggl(\biggl|\int_{0}^{t}\big(k_1(s,t)x(s)-k_{1m}(s,t)x_m(s)\big)ds\biggr|^2\biggr)
		&\le&2(\sqrt{2}L_1h\sigma)^2\\\nonumber
		&+&2(\rho_1+\sqrt{2}L_1h)^2E\biggl(\int_{0}^{t}|x(t)-x_m(t)|^2ds\biggr)
	\end{eqnarray}
	Similarly, for $|k(s,t)|\le \rho$ and using Theorem \ref{Thk}, we get
	\begin{eqnarray}\label{eq:inkf}
		E\biggl(\biggl|\int_{\alpha}^{\beta}\big(k(s,t)x(s)-k_{m}(s,t)x_m(s)\big)ds\biggr|^2\biggr)
		&\le& 2(\beta -\alpha)(\sqrt{2}Lh\sigma)^2\\\nonumber
		&+&2(\rho+\sqrt{2}Lh)^2E\biggl(\int_{\alpha}^{\beta}|x(t)-x_m(t)|^2ds\biggr)
	\end{eqnarray}
	Now,
	\begin{eqnarray*}
		E\biggl(\biggl|\int_{0}^{t}\big(k_2(s,t)x(s)-k_{2m}(s,t)x_m(s)\big)dB(s)\biggr|^2\biggr)
		\le	E\biggl(\int_{0}^{t}\biggl|k_2(s,t)x(s)-k_{2m}(s,t)x_m(s)\biggr|^2ds\biggr)\\
		\le 2E\biggl(\int_{0}^{t}\big((\sqrt{2}L_2h\sigma)^2+(\rho_2+\sqrt{2}L_2h)^2|x(t)-x_m(t)|^2\big)ds\biggr)
	\end{eqnarray*}
	Hence,
	\begin{eqnarray}\label{eq:inkb}
		E\biggl(\biggl|\int_{0}^{t}\big(k_2(s,t)x(s)-k_{2m}(s,t)x_m(s)\big)dB(s)\biggr|^2\biggr)
		&\le&2(\sqrt{2}L_2h\sigma)^2\\\nonumber
		&+&2(\rho_2+\sqrt{2}L_2h)^2E\biggl(\int_{0}^{t}|x(t)-x_m(t)|^2ds\biggr)	
	\end{eqnarray}
	
	Using Theorem \ref{fin}, equation \eqref{eq:ink}, \eqref{eq:inkf} and \eqref{eq:inkb} in \eqref{eq:In}, we get
	\begin{eqnarray*}
		E\big(|x(t)-x_m(t)|^2\big)&\le&7C^2h^2\\
		&+&7\biggl( 2(\beta -\alpha)(\sqrt{2}Lh\sigma)^2+2(\rho+\sqrt{2}Lh)^2E\biggl(\int_{\alpha}^{\beta}|x(t)-x_m(t)|^2ds\biggr)\biggr)\\
		&+&7\biggl(2(\sqrt{2}L_1h\sigma)^2+2(\rho_1+\sqrt{2}L_1h)^2E\biggl(\int_{0}^{t}|x(t)-x_m(t)|^2ds\biggr)\biggr)\\
		&+&7\biggl(2(\sqrt{2}L_2h\sigma)^2+2(\rho_2+\sqrt{2}L_2h)^2E\biggl(\int_{0}^{t}|x(t)-x_m(t)|^2ds\biggr)\biggr)
	\end{eqnarray*}

	\begin{eqnarray}
		E\big(|x(t)-x_m(t)|^2\big)&\le&R_1
		+R_2\int_{0}^{t}E\bigl(|x(s)-x_m(s)|^2\bigr)ds
	\end{eqnarray}
	where,
	$$R_1=7\biggl(C^2h^2+2(\beta-\alpha)(\sqrt{2}Lh\sigma)^2+2(\sqrt{2}L_1h\sigma)^2+2(\sqrt{2}L_2h\sigma)^2\biggr)$$ and
	$$R_2=7\biggl(2(\rho+\sqrt{2}Lh)^2+2(\rho_1+\sqrt{2}L_1h)^2+2(\rho_2+\sqrt{2}L_2h)^2\biggr)$$
	By using Gronwall's inequality, we have
	\begin{eqnarray}
		E\big(|x(t)-x_m(t)|^2\big)&\le&R_1\exp\biggl(\int_{0}^{t}R_2ds\biggr).
	\end{eqnarray}
	which implies that,
	\begin{equation}
		\|x(t)-x_m(t)\|_2^2	=E\big(|x(t)-x_m(t)|^2\big)\le R_1e^{R_2}=O(h^2)
	\end{equation}

	
\end{proof}
	\section{Numerical Examples}

To illustrate the method given in the above section,we consider following examples and compute the approximate solution.The computations are done using Matlab 2013a.

\begin{example}\cite{Khodabin}\label{1}
	Consider the following linear SVFIE,
	$$x(t)=f(t)+\int_{0}^{1}cos(s+t)x(s)ds+\int_{0}^{t}(s+t)x(s)ds+\int_{0}^{t}e^{-3(s+t)}x(s)dB(s)$$ where $s,t\in [0,1)$ in which $f(t)=t^2+sin(s+t)-2cos(1+t)-2sin(t)-\frac{7t^4}{12}+\frac{1}{40}B(t)$, $B(t)$ is a Brownian motion, and $x(t)$ is an unknown stochastic process defined on the probability space $(\Omega, F, P)$.
\end{example}

\begin{table}[]
	\centering
	\caption{Numerical result for m=32 and m=64 in Example \ref{1}}
	\begin{tabular}{c c c c| c c c}
		\hline
		\multicolumn{3}{c}{$m=2^5$}& & &{$m=2^6$} \\
		\hline
		$t$	&WFM &BPF\cite{Khodabin} &HWM\cite{Mohamaddi}  &WFM &BPF\cite{Khodabin} &HWM\cite{Mohamaddi}\\
		\hline
		0.1&0.0114759&0.0199110&0.0189403&0.0085404&0.0155137&0.0184610\\
		0.3&0.0839521&0:1174676&0.1026368&0.0998259&0.0583251&0.1033269\\
		0.5&0.3296197&0.2741207&0.2469981&0.3385104&0.2775350&0.2462734\\
		0.7&0.4891180&0.5144708&0.4624837&0.4933237&0.4886760&0.4644731\\
		0.9&0.7826759&0.7685722&0.7642845&0.8223408&0.8222331&0.7640509\\
		
	\end{tabular}
	
\end{table}
\begin{figure}
	\centering
	\caption{Example \ref{1}'s approximate solution for m=32 and m=64 }
	
	\includegraphics[width=1.\textwidth]{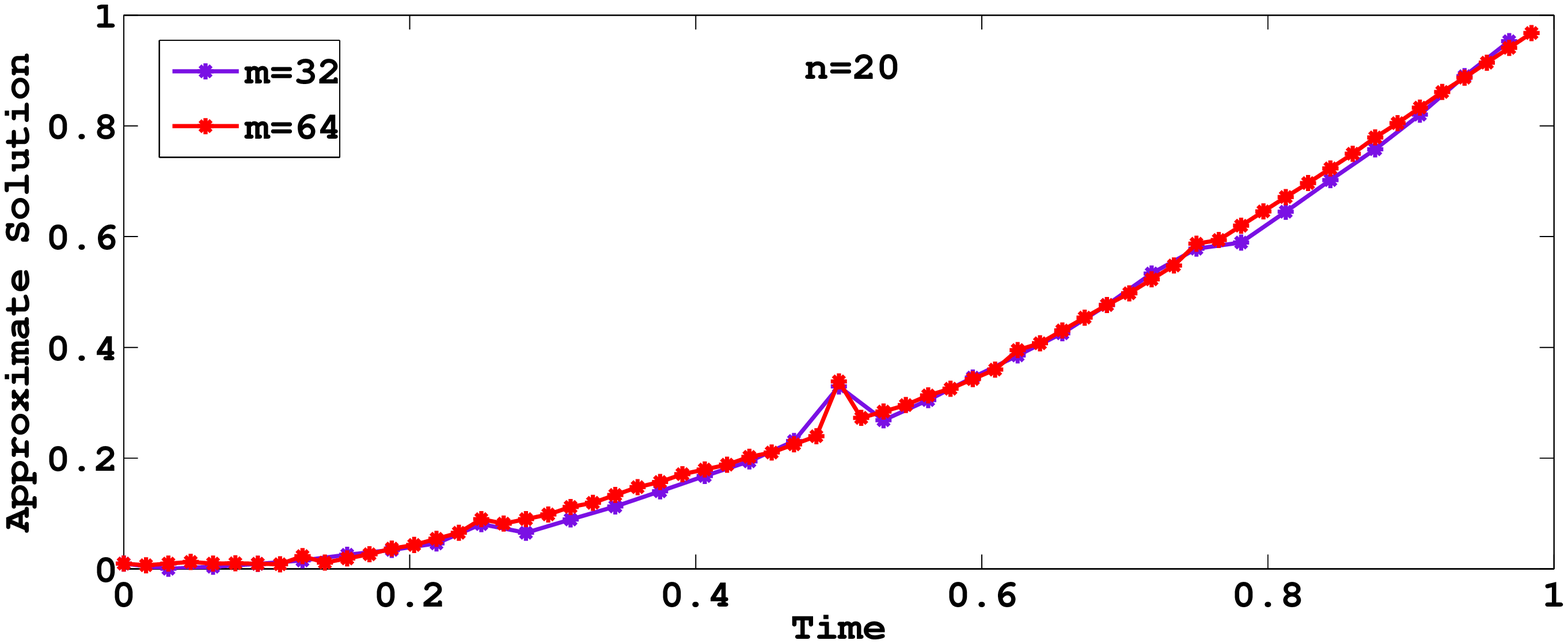}
	
	\label{fig:k=4}
\end{figure}
\begin{figure}
	\caption{Example \ref{2}'s approximate solution for m=32 and m=64 }
	\centering
	\includegraphics[width=1\textwidth]{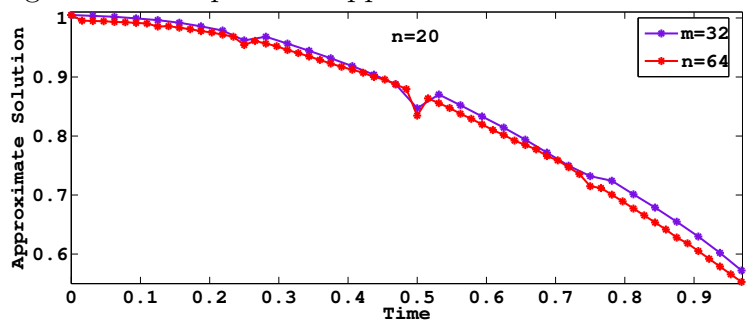}

\end{figure}
\begin{example}\cite{Mohamaddi}\label{2}
	Consider the following linear SVFIE,
	$$x(t)=f(t)+\int_{0}^{1}(s+t)x(s)ds+\int_{0}^{t}(s-t)x(s)ds+\frac{1}{125}\int_{0}^{t}sin(s+t)x(s)dB(s)$$ where $s,t\in [0,1)$ in which $f(t)=2-cos(1)-(1+t)sin(1)+\frac{1}{250}sin(B(t))$, $B(t)$ is a Brownian motion, and $x(t)$ is an unknown stochastic process defined on the probability space $(\Omega, F, P)$.
\end{example}

\begin{table}[ht]
	\centering
	\caption{Numerical result for m=32 and m=64 in Example \ref{2}}
	\begin{tabular}{c c c c| c c c}
		\hline
		\multicolumn{3}{c}{$m=2^5$}& & &{$m=2^6$} \\
		\hline
		$t$	&WFM &BPF\cite{Khodabin} &HWM\cite{Mohamaddi}  &WFM &BPF\cite{Khodabin} &HWM\cite{Mohamaddi}\\
		\hline
		0.1&0.9976241&0.9983232&0.9526175&0.9912432&0.9958677&0.9535115\\
		0.3&0.9592595&0.9427155&0.9044299&0.9510972&0.9618340&0.9058330\\
		0.5&0.8470106&0.8930925&0.8149461&0.8345253&0.8503839&0.8160360\\
		0.7&0.7669107&0.7695923&0.6922649&0.7610515&0.7566968&0.6943825\\
		0.9&0.6438552&0.6924411&0.5480265&0.6105657&0.6120356&0.5496713\\
	\end{tabular}
	
\end{table}

\section{Conclusion}
Since it is challenging to find the exact solution for a majority of the SVFIEs, the numerical technique is crucial in solving these issues. Several numerical solutions have also been developed earlier to determine the approximate solution of SVFIEs. This article also proposes a numerical method to find an approximate solution to SVFIE. It also includes numerical estimates for some SVFIEs. The important part is that error analysis of the approach has been undergone by considering the functions satisfing Lipschitz condition to confirm the validity of the methodology which gives an upper hand to consider more general SVFIEs than the previous method . This method can further be developed to address the non linear stochastic integral equations.  
%
%


\begin{thebibliography}{9}
	\bibitem{Hatamzadeh}
	Saeed Hatamzadeh-Varmazyar, Zahra Masouri, Esmail Babolian, Numerical method for solving arbitrary linear differential equations using a set of orthogonal basis functions and operational matrix, Applied Mathematical Modelling 40 (2016) 233–253.
	
	\bibitem{Kloeden}
	P.E. Kloeden, E. Platen, Numerical Solution of Stochastic Differential Equations, in: Applications of Mathematics, Springer-Verlag, Berlin, 1999.
	
	\bibitem{Oksendal}
	B. Oksendal, Stochastic Differential Equations, fifth ed., in: An Introduction with Applications, Springer-Verlag, New York, 1998.
	\bibitem{tudor}
	C. Tudor, M. Tudor, Approximation schemes for Ito–Volterra stochastic equations, Boletin Sociedad Matemática Mexicana 3 (1) (1995) 73–85.
	\bibitem{Ganti}
	G. Prasada Rao, Piecewise Constant Orthogonal Functions and Their Application to Systems and Control, Springer, Berlin, 1983.
	
	\bibitem{Walsh}
	J. L. Walsh, A closed set of normal orthogonal functions, Amer. J. Math. vol. 55 (1923)
	pp. 5-24.
	\bibitem{Paikaray}
	Paikaray P. P., Beuria S., Parida N. Ch., Numerical approximation of p-dimensional stochastic Volterra integral equation using Walsh function. J Math Comput SCI-JM.(2023); 31(4):448--460
	\bibitem{Chen}
	C. F. Chen and C. H. Hsiao, A Walsh Series Direct Method for Solving
	Variational Problems, Journal of The Franklin Institute, VoL 300 No. 4, October 1975.
	\bibitem{Hsiao}
	C. H. Hsiao and C. F. Chen, Solving integral equation via Walsh functions,	Comput and Elect Engng Vol 6, pp 2"/9-292 (1979). 
	\bibitem{Cheng}
	C. F. Cheng, Y. Tsay, T. T. Wu, Walsh operational matrices for fractional calculus and their application to distributed systems,Journal of The Franklin Institute-engineering and Applied Mathematics(1997).
	\bibitem{Golubov}
	B. Golubov, A. Efimov, V. Skvortsov,	Walsh Series and Transforms Theory and Applications.
	\bibitem{Maleknejad}
	K. Maleknejad, M. Khodabin, M. Rostami, Numerical solution of stochastic Volterra integral equations by a stochastic operational matrix based on block pulse functions,Mathematical and Computer Modelling, Mathematical and Computer Modelling 55 (2012) 791–800.
	\bibitem{Khodabin}
	M. Khodabin, K. Maleknejad, M. Rostami and M. Nouri, Numerical approach for solving stochastic Volterra-Fredholm integral equations by stochastic operational matrix, Comput. Math. Appl., 64(2012), No. 6, 1903-1913.
	\bibitem{Mohamaddi}
	F. Mohammadi, Numerical Solution of stochastic Volterra-Fredholm integral equations using Haar wavelet, U.P.B. Sci. Bull., Series A, Vol. 78, Iss. 2, 2016, 1223-7027.
\end{thebibliography}
\end{document}